\documentclass{amsart}
\usepackage[utf8x]{inputenc}
\usepackage{amsmath}
\usepackage{url}
\usepackage{tikz}
\usepackage{amsfonts}
\usetikzlibrary{calc,intersections}

\setlength{\headheight}{12pt}

\DeclareMathOperator{\conv}{conv}
\DeclareMathOperator{\xc}{xc}
\DeclareMathOperator{\nnegrk}{rk_{+}}
\newcommand{\setDef}[2]{\{{#1}\,:\,{#2}\}}
\newcommand{\R}{\mathbb{R}}
\newcommand{\Z}{\mathbb{Z}}
\newcommand{\N}{\mathbb{N}}
\newcommand{\scalProd}[2]{\langle{#1},{#2}\rangle}

\newtheorem{prop}{Proposition}
\newtheorem{lem}[prop]{Lemma}
\newtheorem{thm}[prop]{Theorem}

\newcommand{\n}{n} 
\newcommand{\di}{d} 
\newcommand{\m}{m} 
\newcommand{\otherdi}{d'} 

\title{Small Extended Formulations for Cyclic Polytopes}

\author{Yuri Bogomolov}
\address{P.G. Demidov Yaroslavl State University, ul. Sovetskaya, 14, Yaroslavl 150000, Russia} 
\email{mathematics@inbox.ru}

\author{Samuel Fiorini}
\thanks{S. Fiorini was partially funded by F.R.S.-FNRS (scientific mission OUT)}
\address{Universit\'{e} libre de Bruxelles, D\'{e}partement de Math\'{e}matique, Boulevard du Triomphe, B-1050 Brussels, Belgium}
\email{sfiorini@ulb.ac.be}

\author{Aleksandr Maksimenko}
\thanks{ A. Maksimenko was supported by the project No. 477 of P.G. Demidov Yaroslavl State University within State Assignment for Research}
\address{P.G. Demidov Yaroslavl State University, ul. Sovetskaya, 14, Yaroslavl 150000, Russia} 
\email{maximenko.a.n@gmail.com}

\author{Kanstantsin Pashkovich}
\thanks{K. Pashkovich was funded by F.R.S.-FNRS (research project T.0100.13, Semaphore 14620017)}
\address{University of Waterloo, Department of Combinatorics and Optimization, University Avenue West 200, Waterloo N2L 3G1, Ontario, Canada} 
\email{kanstantsin.pashkovich@gmail.com}

\usepackage{fancyhdr}
\pagestyle{fancy}
\lhead{Y. Bogomolov, S. Fiorini, A. Maksimenko, K. Pashkovich}  
\rhead{}

\begin{document}

\begin{abstract}
We provide an extended formulation of size~$O(\log \n)^{\lfloor\frac{\di}{2}\rfloor}$ for the cyclic polytope with dimension $\di$ and $\n$~vertices $(i,i^2,\ldots,i^{\di})$, $i \in [n]$. First, we find an extended formulation of size $\log(\n)$ for $\di = 2$. Then, we use this as base case to construct small-rank nonnegative factorizations of the slack matrices of higher-dimensional cyclic polytopes, by iterated tensor products. Through Yannakakis's factorization theorem, these factorizations yield small-size extended formulations for cyclic polytopes of dimension $\di \geq 3$.
\end{abstract}

\maketitle

\section{Introduction}  
Extended formulations is a rapidly developing field with connections to, among others, discrete mathematics and theoretical computer science. 
Two main reasons that make this field interesting are the facts that: (i) small-size extended formulations allow efficient formulations of various optimization problems; (ii) lower bounds on the sizes of extended formulations show fundamental limits to what can be efficiently expressed through linear programs, and more generally conic programs. Here our focus is on linear programming extended formulation, thus the underlying cone is the nonnegative orthant.

In this paper, we provide an extended formulation of size (at most)~$2(2\lfloor \log_2(n-1)\rfloor+2)^{\lfloor \di/2 \rfloor}$ for the $\di$-dimensional $\n$-vertex cyclic polytope with vertices $(i,i^2,\ldots,i^{\di})$, $i \in [\n]$. Here, $[\n]$ denotes the integer numbers from $1$ to $n$, i.e. the numbers $1$, $2$, \ldots, $n$. The size of this extended formulation is asymptotically significantly smaller than the size of the trivial ``vertex'' extended formulation, provided~$\di \ll (2\log_2 \n) / \big(1+\log_2 \log_2 (4n)\big)$.

As a possible application, consider the problem of minimizing a degree-$\di$ polynomial $p(t)$ over $t \in [n]$, where $\di$ is bounded. Our result implies that this can be formulated as a linear program with only a polylogarithmic number of constraints in $n$.

\subsection{Polytopes, Extensions and Extended Formulations}

Recall that a \emph{polytope}~$P\subseteq\R^{\di}$ is the convex hull of a finite point set~$V\subseteq\R^{\di}$. Without loss of generality, we assume that $P$ is full-dimensional, that is, $\dim P = \di$. Then $P$ can alternatively be described as the solution set of a system of finitely many linear inequalities, i.e.
\begin{displaymath}
    P=\setDef{x \in \R^{\di}}{\scalProd{a_j} {x}\le b_j,\, j\in [m]}
\end{displaymath}
for some~$a_j\in\R^{\di}$ and $b_j\in \R$, $j\in [m]$. The \emph{size} of the above linear description is the number of inequalities in the system, i.e.\ the number~$\m$. (When $P$ is not full-dimensional its linear description may contain linear equations. They are \emph{not} taken into account in the computation of the size.)

A polytope~$Q\subseteq \R^{\otherdi}$ together with an affine map~$\pi:\R^{\otherdi}\rightarrow \R^{\di}$ is called an \emph{extension} of polytope~$P \subseteq \R^{\di}$ if~$\pi(Q) = P$. The \emph{size} of extension~$Q$ is defined as the number of facets of $Q$, that is, the minimum number of inequality constraints in a linear description of $Q$.

By choosing appropriately the origin and the basis vectors of $\R^{\otherdi}$, we may assume that the projection $\pi$ is given by $\pi(x,y) := x$. In this case, the \emph{extended formulation} determined by the extension is simply a linear description of $Q$.
Extended formulations and extensions are two basically interchangeable concepts. 

The \emph{extension complexity} of polytope $P$ is the minimum size of an extension of $P$, or equivalently the minimum size of an extended formulation of $P$. This is denoted by $\xc(P)$. Thus $\xc(P)$ is the minimum number of facets of a polytope that projects to $P$.

It is a nontrivial problem to determine the extension complexity of a given polytope~$P$. In the first place, $P$ has an infinite number of extensions. Fortunately, it suffices to look at one single matrix for computing $\xc(P)$. The \emph{slack matrix} of polytope $P$ relative to point set~$V=\{v_1,\ldots, v_n\} \subseteq \R^{\di}$ with $P = \conv(V)$ and linear description $\scalProd{a_j} {x}\le b_j,\, j\in [m]$ of~$P$ is the matrix~$M\in\R^{\n \times \m}$ defined as\footnote{Throughout the paper we use superindices for dimensions, subindices for enumerating indices. We use parentheses to refer to a row or an element of a matrix, depending on the number of indices enclosed in the parentheses.}
\begin{displaymath}
    M(i,j):= b_j-\scalProd{a_j}{v_i}\,.
\end{displaymath}
That is, the entry corresponding to point~$v_i$ and inequality~$\scalProd{a_j}{x} \le b_j$ equals the slack~$b_j-\scalProd{a_j}{v_i}\ge 0$. We say that nonnegative vectors~$\alpha_i\in\R_{+}^r$ for $i \in [\n]$ and~$\beta_j\in\R_{+}^r$ for $j \in [\m]$ form a rank-$r$ \emph{nonnegative factorization} of the matrix~$M$ if the equation
\begin{displaymath}
    M(i,j)=\scalProd{\alpha_i}{\beta_j}
\end{displaymath}
holds for all~$i\in [\n]$ and~$j \in [\m]$. The \emph{nonnegative rank} of~$M$ is the minimum rank of a nonnegative factorization of~$M$, and is denoted $\nnegrk(M)$. 
The connection between nonnegative rank and extension complexity was provided by Yannakakis~\cite{Yannakakis}: $\xc(P) = \nnegrk(M)$, given that $\dim P \ge 1$. In other words, the minimum size of an extension of a polytope equals the nonnegative rank of any of its slack matrices.

In the present paper, we mainly work with nonnegative factorizations for slack matrices to guarantee the existence of extended formulations of certain size. 


\section{Cyclic Polytopes}

In this section, we define cyclic polytopes and list some of their properties. For more detailed information on cyclic polytopes we refer the reader to~\cite{Gruenbaum}, \cite{Ziegler}.

For $n\in\N$ and ~$\di \in\N$, $2\le \di$ let us define the corresponding {\it cyclic polytope}~$P^{\di}_{n}\subseteq\R^{\di}$ as the convex hull of the points~$v_i:=(i,i^2,\ldots,i^{\di})$, $i\in[n]$, i.e.
\begin{displaymath}
  P^{\di}_{n}:=\conv\setDef{(i,i^2,\ldots,i^{\di})}{i\in [n]}\,.
\end{displaymath}

We assume $n > \di$, then the cyclic polytope~$P^{\di}_{n}$ has $n$~vertices and dimension~$\di$. Moreover, the polytope~$P^{\di}_{n}$ is simplicial, i.e. every facet contains exactly $\di$~vertices. It is known that, a set of $\di$~vertices~$v_i$, $i \in S$ where~$S\subseteq [n]$ defines a facet if and only if it satisfies  {\it Gale's evenness condition}: the cardinality of the set~$[\ell_1,\ell_2]\cap S$ is even for all~$\ell_1, \ell_2 \in [n]$, $\ell_1 < \ell_2$ with $\ell_1, \ell_2 \notin S$.  Therefore, the slack matrix for the cyclic polytope~$P^{\di}_{n}$ can be obtained as
\begin{equation}
\label{eq:slack}
M^{\di}_{n}(i,S) := \prod_{j \in S} |j-i|\,,
\end{equation}
where $i\in[n]$ and $S\subseteq [n]$, $|S|=d$ satisfies Gale's evenness condition.

\section{Extended Formulation}

In this section, we construct extended formulations for cyclic polytopes: starting from the two dimensional case, going to even dimensions and finally considering odd dimensions.

\subsection{The case $d=2$}

For the sake of exposition, let us introduce the next notation. For $t_1$, $t_2\in\Z$, $t_1<t_2$ and ~$\di \in\N$, $2\le \di$ let us define the corresponding {\it cyclic polytope}~$P^{\di}_{[t_1,t_2]}$
\begin{displaymath}
  P^{\di}_{[t_1,t_2]}:=\conv\setDef{(i,i^2,\ldots,i^{\di})}{i\in [t_1,t_2]\cap\Z}\,.
\end{displaymath}
Note, that for every $n\in\N$ we have $P^{\di}_{n}=P^{\di}_{[1, n]}$.

 It is not hard to see that for two pairs of integers~$(t_1,t_2)$ and $(k_1,k_2)$ the polytopes $P^{\di}_{[t_1,t_2]}$ and $P^{\di}_{[k_1,k_2]}$ are affinely isomorphic if and only if the equation $t_2-t_1=k_2-k_1$ holds. Indeed, if $t_2-t_1$ is not equal to $k_2-k_1$ the polytopes $P^{\di}_{[t_1,t_2]}$ and $P^{\di}_{[k_1,k_2]}$ have different number of vertices, and thus can not be affinely isomorphic. On the other hand, if $t_2-t_1$ equals $k_2-k_1$ then they have the same slack matrix because translating indices preserves the difference $j - i$ in \eqref{eq:slack}. Concretely, the following affine map defines an isomorphism between $P^{\di}_{[t_1,t_2]}$ and $P^{\di}_{[k_1,k_2]}$: $(x_1,\ldots,x_{\di}) \mapsto (y_1,\ldots,y_{\di})$, where
\begin{equation}\label{eq:affine_isomorphism} 
  y_i:=(k_1-t_1)^i+\sum_{j=1}^i \binom{i}{j}(k_1-t_1)^{i-j} x_j\,.
\end{equation}

\begin{lem}\label{lem:two_dim}
 For the polytope~$P^2_{n}$ there is an extension of size at most~$2\lfloor \log_2(n-1)\rfloor+2$.
\end{lem}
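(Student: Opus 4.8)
The plan is to prove a stronger, ``normalized'' statement by induction and then specialize. Using the affine isomorphism \eqref{eq:affine_isomorphism}, intervals of equal length give affinely isomorphic polytopes, so it suffices to bound $\xc(P^2_{[0,N]})$ where $N := \n-1$; concretely I will show $\xc(P^2_{[0,N]}) \le 2\lfloor \log_2 N\rfloor + 2$ for all $N \ge 1$, which is exactly the claim after substituting $N = \n-1$. Geometrically $P^2_{[0,N]}$ is the convex polygon whose vertices $(i,i^2)$, $i \in \{0,\dots,N\}$, lie on the parabola $x_2 = x_1^2$; the base case $N = 1$ is the segment $\conv\{(0,0),(1,1)\}$, whose extension complexity is $2 = 2\lfloor\log_2 1\rfloor + 2$.

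The engine of the induction is a reflection preserving the parabola. For $c \in \tfrac12\Z$ let $\rho_c$ be the affine involution $\rho_c(x_1,x_2) := (2c - x_1,\, x_2 - 4c\,x_1 + 4c^2)$. A direct check shows $\rho_c$ sends $(t,t^2)$ to $(2c-t,(2c-t)^2)$, so it fixes the parabola, its fixed-point set is the vertical line $\{x_1 = c\}$, and it maps the vertex indexed by $i$ to the vertex indexed by $2c - i$. Writing $m := \lfloor N/2\rfloor$ and choosing $c := m$ when $N = 2m$ and $c := m + \tfrac12$ when $N = 2m+1$, the reflection carries the vertex set $\{0,\dots,m\}$ onto $\{2c-m,\dots,2c\}$, and in both cases these sets meet or abut, so their union is exactly $\{0,\dots,2c\} = \{0,\dots,N\}$. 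This yields the recursive identity $P^2_{[0,N]} = \conv\big(P^2_{[0,m]} \cup \rho_c(P^2_{[0,m]})\big)$. Moreover all vertices $0,\dots,m$ satisfy $x_1 \le c$, so $P^2_{[0,m]}$ lies in the closed halfspace $\{x_1 \le c\}$ bounded by the mirror, while its reflection lies in $\{x_1 \ge c\}$. After an affine change of coordinates $\rho_c$ becomes an orthogonal reflection, and since extension complexity is affine-invariant we may assume $\rho_c$ orthogonal.

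The last ingredient is the reflection-relation bound: if a polytope $Q$ lies in one of the two closed halfspaces bounded by the mirror of a reflection $\rho$, then $\conv(Q \cup \rho(Q))$ has an extension of size $\xc(Q)+2$. The point is that one reuses a single copy of an extended formulation of $Q$---rather than the two copies of the generic Balas union construction, which would give only $2\,\xc(Q)$ and hence useless exponential growth---together with two extra inequalities governing one ``fold'' parameter measuring signed distance to the mirror; that a shared copy suffices is forced by the $\rho$-symmetry of $\conv(Q\cup\rho(Q))$. Establishing this gadget cleanly, and checking that exactly two inequalities are added, is the main obstacle and is where essentially all the real work lies (a generic instance, a horizontal segment reflected across a parallel line giving a square, already realizes $2 \mapsto 4$, so the additive $+2$ cannot be improved to $+1$ in general). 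Granting it, the identity above gives $\xc(P^2_{[0,N]}) \le \xc(P^2_{[0,\lfloor N/2\rfloor]}) + 2$; since $\lfloor \log_2 N\rfloor = 1 + \lfloor \log_2\lfloor N/2\rfloor\rfloor$ for $N \ge 2$, a straightforward induction from $\xc(P^2_{[0,1]}) = 2$ yields $\xc(P^2_{[0,N]}) \le 2\lfloor \log_2 N\rfloor + 2$, completing the proof after setting $N = \n-1$.
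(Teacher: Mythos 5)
Your proposal is correct and follows essentially the same route as the paper: split the vertex set at its midpoint, observe that the two halves are exchanged by an affine involution fixing a line (you use a single family $\rho_c$ with $c\in\tfrac12\Z$ where the paper re-centres the interval and treats odd and even $n$ by a reflection in $x_1=0$ and a shear, respectively), apply the reflection-relation bound to get $+2$ per halving, and induct. The ``gadget'' you flag as the remaining work is exactly Theorem~2 / Proposition~3 of Kaibel--Pashkovich, which the paper likewise invokes as a citation rather than reproving.
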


\begin{proof}
Due to~\eqref{eq:affine_isomorphism}, we can affinely transform the polytope~$P^2_{n}$ into  the polytope~$$P^2_{[ -(n-1)/2,  (n-1)/2 ]}$$  
if~$n$ is odd and into the polytope 
$$P^2_{[-n/2+1,n/2]}$$
 if~$n$ is even.

In turn, for every $k\in\Z$ the polytope~$P^2_{[-k,k]}$ can be represented as the convex hull of two polytopes~$P^2_{[-k,0]}$ and~$P^2_{[0,k]}$, i.e.
\begin{displaymath}
 P^2_{[-k,k]}=\conv(P^2_{[-k,0]}\cup P^2_{[0,k]})\,.
\end{displaymath}
A similar representation exists for the polytope~$P^2_{[-k+1,k]}$, $k\in\Z$, namely
\begin{displaymath}
 P^2_{[-k+1,k]}=\conv(P^2_{[-k+1,0]}\cup P^2_{[1,k]})\,.
\end{displaymath}

There is a particularly nice relationship between $P^2_{[-k,0]}$ and $P^2_{[0,k]}$: the polytope~$P^2_{[-k,0]}$ is an image of the polytope $P^2_{[0,k]}$ under the reflection with respect to the hyperplane $x_1=0$ (a map defined by the sign change of the first coordinate). A similar statements holds for the polytopes $P^2_{[-k+1,0]}$ and $P^2_{[1,k]}$: the polytope~$P^2_{[-k+1,0]}$ is an image of the polytope $P^2_{[1,k]}$ under a sheared transformation~$(x_1,x_2)\mapsto (1 - x_1, x_2 - 2 x_1 + 1)$.

This fact allows us to use reflection relations (see Theorem~2 in~\cite{KaibelPashkovich}) and~\eqref{eq:affine_isomorphism} to state that every size-$f$ extended formulation of the polytope $P^2_{\lceil n/2 \rceil}$ leads to a size-$(f+2)$ extended formulation of the polytope $P^2_{n}$. In particular, the  extended formulation for $P^2_{[-k,k]}$ can be given as follows
$$
P^2_{[-k,k]}=\setDef{(x_1,x_2)\in\R^2}{\exists z_1 \text{ such that }(z_1,x_2)\in P^2_{[0,k]}\,\text{and}\, -z_1 \le x_1\le z_1}\,.
$$
An extended formulation for $P^2_{[-k+1,k]}$ can be constructed analogously (see remarks after Proposition~3 in~\cite{KaibelPashkovich})
\begin{align*}
P^2_{[-k+1,k]}=\{ (x_1,x_2)\in\R^2\,:\,\exists z_1,z_2 \text{ such that }(z_1,z_2)\in P^2_{[1,k]}\\
\text{and}\quad z_2-z_1=x_2-x_1, 2-3z_1+z_2 \le x_1+x_2\le z_1+z_2\}\,.
\end{align*}
Thus,  $\xc(P^2_{2k}) \le \xc(P^2_{k}) + 2$ and $\xc(P^2_{2k-1}) \le \xc(P^2_{k}) + 2$.

Let us note that $\xc(P^2_{n}) \le 2 \lfloor \log_2(n-1) \rfloor + 2$ can be easily verified for $n = 3,4,5,6$. Hence, to finish the proof it suffices to use the inequalities below
$$\xc(P^2_{2k}) \le \xc(P^2_{k}) + 2 \le  2 \lfloor \log_2(k-1) \rfloor + 4 =2 \lfloor \log_2 (2k-2) \rfloor + 2$$
and
$$\xc(P^2_{2k-1}) \le \xc(P^2_{k}) + 2 \le 2 \lfloor \log_2(k-1) \rfloor + 4 = 2 \lfloor \log_2 \big((2k-1) - 1\big) \rfloor + 2\,.$$
Therefore, we have
$$\xc(P^2_{n}) \le  2\lfloor \log_2(n-1)\rfloor+2$$
for all $n\ge 3$.
\end{proof}

We would like to note, that an explicit factorization of the slack matrix $M^2_n$ can be constructed in a similar way as the factorization in  \cite{FRT} .

\subsection{The case $\di=2q$}

Unfortunately, in the three dimensional case and higher the relationship between the polytopes~$P^{\di}_{[-k,0]}$ and~$P^{\di}_{[0,k]}$ is more complicated: the polytope~$P^{\di}_{[-k,0]}$ is an image of the polytope~$P^{\di}_{[0,k]}$ under the sign change of all coordinates with an odd index. This does not correspond to a symmetry with respect to a hyperplane, and thus we are not able to use reflection relations here. 

However, we may use the nonnegative factorization  for the slack matrix of~$P^2_{n}$ guaranteed by Lemma~\ref{lem:two_dim} together with Yannakakis' theorem~\cite{Yannakakis}, in order to prove that there is a nonnegative factorization of the slack matrix for~$P^{2q}_{n}$ of certain size. Let us consider even dimensions first, i.e. assume~$\di$ to be equal to~$2q$, $q\in \N$.

For two matrices~$A$ and $B$ of the same size, we define the elementwise or Hadamard product~$A \circ B$ by the following equation~$(A \circ B)(i,j):=A(i,j) B(i,j)$. We will need the following folklore result. 

\begin{lem}
\label{lem:Kronecker}
For all nonnegative matrices $A$, $B$ with the same number of rows and columns:
$$
\nnegrk(A \circ B) \le \nnegrk(A) \nnegrk(B)
$$
\end{lem}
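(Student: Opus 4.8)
The plan is to exhibit an explicit nonnegative factorization of the Hadamard product $A \circ B$ whose rank is at most the product of the individual nonnegative ranks, by taking elementwise (Kronecker-style) products of the given factorizations. Let $r := \nnegrk(A)$ and $s := \nnegrk(B)$, and fix nonnegative factorizations $A(i,j) = \scalProd{\alpha_i}{\beta_j}$ with $\alpha_i, \beta_j \in \R_+^r$, and $B(i,j) = \scalProd{\gamma_i}{\delta_j}$ with $\gamma_i, \delta_j \in \R_+^s$. The goal is to build from these a factorization of $A \circ B$ of rank $rs$.

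First I would define, for each row index $i$, the vector $\alpha_i \otimes \gamma_i \in \R_+^{rs}$ whose entry indexed by the pair $(p,q) \in [r]\times[s]$ equals $\alpha_i(p)\,\gamma_i(q)$, and similarly $\beta_j \otimes \delta_j \in \R_+^{rs}$ with entry $\beta_j(p)\,\delta_j(q)$. These are nonnegative because they are products of nonnegative numbers, so they are legitimate vectors in $\R_+^{rs}$. The key step is the computation of the inner product: by distributing the product of the two sums over the index set $[r]\times[s]$,
\begin{displaymath}
  \scalProd{\alpha_i \otimes \gamma_i}{\beta_j \otimes \delta_j}
  = \sum_{p=1}^{r}\sum_{q=1}^{s} \alpha_i(p)\gamma_i(q)\beta_j(p)\delta_j(q)
  = \Big(\sum_{p=1}^{r}\alpha_i(p)\beta_j(p)\Big)\Big(\sum_{q=1}^{s}\gamma_i(q)\delta_j(q)\Big)\,.
\end{displaymath}
The two factors on the right are exactly $\scalProd{\alpha_i}{\beta_j} = A(i,j)$ and $\scalProd{\gamma_i}{\delta_j} = B(i,j)$, so the inner product equals $A(i,j)B(i,j) = (A\circ B)(i,j)$, as required. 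Thus the vectors $\alpha_i \otimes \gamma_i$ and $\beta_j \otimes \delta_j$ form a nonnegative factorization of $A \circ B$ of rank $rs$, giving $\nnegrk(A \circ B) \le rs = \nnegrk(A)\nnegrk(B)$.

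The only genuinely substantive point is the factorization of the double sum over $[r]\times[s]$ into a product of two single sums; this is the standard identity $\big(\sum_p x_p\big)\big(\sum_q y_q\big) = \sum_{p,q} x_p y_q$ applied with $x_p = \alpha_i(p)\beta_j(p)$ and $y_q = \gamma_i(q)\delta_j(q)$, and it is purely formal once the vectors are defined correctly. Since both $A$ and $B$ are assumed nonnegative, their slack-style factorizations exist and have finite rank, so there is no degeneracy to worry about. I do not anticipate a real obstacle here: the entire content is choosing the tensor-product indexing for the new vectors and verifying nonnegativity is preserved, which it is since every entry is a product of nonnegative reals.
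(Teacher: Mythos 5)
Your proof is correct and follows essentially the same route as the paper: both construct the tensor products $\alpha_i \otimes \gamma_i$ and $\beta_j \otimes \delta_j$ and verify they form a rank-$rs$ nonnegative factorization of $A \circ B$. Your version simply spells out the inner-product computation that the paper leaves implicit.
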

\begin{proof}
If matrices~$A, B$ admit nonnegative factorizations given by vectors~$\alpha_i \in \R_+^{r}$, $\beta_j\in\R_+^{r}$ and vectors~$\gamma_i\in\R_+^{s}$, $\zeta_j\in\R_+^{s}$ respectively, then the matrix~$A\circ B$ has a size-$(rs)$ nonnegative factorization defined by the vectors~$\alpha_i\otimes\gamma_i\in\R_+^{r \times s}$ and $\beta_j\otimes\zeta_j\in\R_+^{r \times s}$. Here, for every two vectors $\mu\in\R^{r}$ and $\tau\in\R^{s}$ the vector~$\mu\otimes\tau$ lies in $\R^{r \times s}$ and is defined as $(\mu\otimes\tau)(i,j):=\mu(i)\tau(j)$.
\end{proof}

\begin{lem}\label{lem:even_case}
  The polytope~$P^{2q}_{n}$ has an extension of size at most~$\big(\xc(P^2_{n})\big)^q$.
\end{lem}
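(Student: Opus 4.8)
The plan is to work, as the preceding discussion suggests, with the slack matrix $M^{2q}_n$ and to bound its nonnegative rank directly, since by Yannakakis' theorem~\cite{Yannakakis} the quantity $\nnegrk(M^{2q}_n)$ equals $\xc(P^{2q}_n)$, the minimum size of an extension of $P^{2q}_n$. I would exhibit $M^{2q}_n$ as a Hadamard product of $q$ matrices, each of which is assembled from the two-dimensional slack matrix $M^2_n$ and therefore has nonnegative rank at most $\xc(P^2_n)$ by Lemma~\ref{lem:two_dim}. Iterating the Hadamard bound of Lemma~\ref{lem:Kronecker} then yields $\nnegrk(M^{2q}_n) \le \big(\xc(P^2_n)\big)^q$, which is exactly the claimed bound.

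The heart of the argument is a combinatorial decomposition of the facets. Recall from~\eqref{eq:slack} that $M^{2q}_n(i,S) = \prod_{j\in S}|j-i|$, where $S$ ranges over the $2q$-subsets of $[n]$ satisfying Gale's evenness condition, and that for $\di=2$ the facets of $P^2_n$ are exactly the consecutive pairs $\{a,a+1\}$, $a\in[n-1]$, together with the pair $\{1,n\}$. I claim that every Gale-even $2q$-set $S$ can be partitioned into $q$ such edges. To see this, I would analyze the maximal runs of consecutive elements (blocks) of $S$: Gale's evenness is equivalent to the statement that every block flanked on both sides by elements of $[n]\setminus S$ has even length. Since $|S|=2q$ is even, the blocks containing the endpoints $1$ and $n$ (if present) then either both have even length or both have odd length. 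In the even case every block splits into consecutive pairs; in the odd case I first set aside the single edge $\{1,n\}$, after which the two affected blocks become even, and again split everything into consecutive pairs. Thus a partition $S = F_1 \cup \cdots \cup F_q$ into pairwise disjoint edges $F_k$ of $P^2_n$ always exists, and I fix one such partition for every $S$.

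With this decomposition in hand the assembly is routine. For each $k\in[q]$ I define a matrix $A_k$ having the same rows and columns as $M^{2q}_n$ by $A_k(i,S) := M^2_n(i, F_k(S))$, where $F_k(S)$ denotes the $k$-th edge in the chosen partition of $S$. Each column of $A_k$ is a copy of a column of $M^2_n$, so selecting columns (with repetition) cannot increase the nonnegative rank, and $\nnegrk(A_k) \le \nnegrk(M^2_n) = \xc(P^2_n)$. Because the partition is disjoint, $\prod_{j\in S}|j-i| = \prod_{k=1}^q \prod_{j\in F_k(S)}|j-i|$, that is, $M^{2q}_n = A_1 \circ \cdots \circ A_q$. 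Applying Lemma~\ref{lem:Kronecker} a total of $q-1$ times gives $\nnegrk(M^{2q}_n)\le \prod_{k=1}^q \nnegrk(A_k) \le \big(\xc(P^2_n)\big)^q$, and Yannakakis' theorem converts this into an extension of $P^{2q}_n$ of the asserted size.

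The step I expect to be the main obstacle is the combinatorial claim that every Gale-even $2q$-set decomposes into edges of the polygon $P^2_n$; everything else is bookkeeping with nonnegative factorizations together with the already-established lemmas. In particular I would take care with the degenerate configurations — for instance when $S=[n]$ forms a single block, or when exactly one of $1,n$ lies in $S$ — to ensure that the parity argument and the use of the special edge $\{1,n\}$ remain valid in all cases.
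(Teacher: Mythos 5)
Your proposal is correct and follows essentially the same route as the paper: partition each Gale-even $2q$-set into $q$ edges of $P^2_{n}$ (consecutive pairs or $\{1,n\}$), write $M^{2q}_{n}$ as a Hadamard product of $q$ matrices whose columns are columns of $M^2_{n}$, and apply Lemma~\ref{lem:Kronecker}. Your parity analysis of the blocks is in fact more detailed than the paper's, which simply asserts the existence of the partition.
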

\begin{proof}
We construct $q$~matrices~$C_1$,\ldots,$C_q$ of suitable dimension such that the elementwise product $C_1 \circ \cdots \circ C_q$ equals the slack matrix~$M^{2q}_{n}$. In order to do that note that every set~$S\subseteq[n]$, $|S|=2q$ satisfying Gale's evenness condition can be partitioned into $q$~pairs $S_1$,\ldots,$S_q$, where each pair is equal to~$\{1,n\}$ or consists of two consecutive integers from~$[n]$. Every set~$S_r$, $1\le r\le q$ also satisfies Gale's evenness condition and consists of two elements, thus for every set~$S_r$ there is a corresponding column in $M^2_{n}$ and 
\begin{displaymath}
  M^2_{n}(i,S_r)=\prod_{j\in S_r} |j-i|\,.
\end{displaymath}
Now, define the entries of the matrices~$C_1$,\ldots,$C_q$ in the column indexed by the set~$S$ as~$C_r(i,S):=M^2_{n}(i,S_r)$. Notice that
\begin{displaymath}
(C_1 \circ \cdots \circ C_q)(i,S) = \prod_{r = 1}^q \prod_{j \in S_r} |j-i| = \prod_{j \in S} |j-i| = M^{2q}_{n}(i,S)\,.
\end{displaymath}

Finally, it is straightforward to verify that the matrices~$C_1$,\ldots,$C_q$ are obtained from~$M^2_{n}$ by duplicating, deleting and reordering columns, and thus the nonnegative rank of every of these matrices is bounded from above by the nonnegative rank of the matrix~$M^2_{n}$. Hence, by Lemma~\ref{lem:Kronecker} the slack matrix~$M^{2q}_{n}$ admits a nonnegative factorization of size at most~$\big(\xc(P^2_{n})\big)^q$.
\end{proof}

\subsection{The case $\di=2q+1$}

\begin{lem}\label{lem:odd_case} 
The polytope~$P^{2q+1}_{n}$ has an extension of size at most~$2 \xc(P^{2q}_{n-1})$.
\end{lem}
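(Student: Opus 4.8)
The plan is to bound the nonnegative rank of the slack matrix $M^{2q+1}_{n}$ and then invoke Yannakakis's theorem, exactly as in the even case. The starting point is a structural observation about Gale's evenness condition for sets of \emph{odd} cardinality: if $S \subseteq [n]$ with $|S| = 2q+1$ satisfies Gale's evenness condition, then $S$ must contain $1$ or $n$. Indeed, if both $1 \notin S$ and $n \notin S$, then taking $\ell_1 = 1$ and $\ell_2 = n$ in Gale's condition forces $|[1,n] \cap S| = |S| = 2q+1$ to be even, a contradiction. This lets me split the columns of $M^{2q+1}_{n}$ into two groups: the columns indexed by sets $S$ with $1 \in S$ (Group~1), and those with $1 \notin S$, which therefore satisfy $n \in S$ (Group~2).

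For Group~1, I would factor out the boundary term corresponding to the element $1$. Writing $S = \{1\} \cup S'$ with $S' \subseteq \{2, \dots, n\}$, we have $M^{2q+1}_{n}(i,S) = |i - 1| \cdot \prod_{j \in S'} |j - i|$. Shifting indices by one, set $T := \{\,j - 1 : j \in S'\,\} \subseteq [n-1]$; then $\prod_{j \in S'}|j - i| = \prod_{t \in T}|t - (i-1)| = M^{2q}_{n-1}(i-1, T)$. The key combinatorial step here is to check that $T$ is a Gale-even set of size $2q$ in $[n-1]$, i.e.\ that $T$ indexes a genuine column of $M^{2q}_{n-1}$; this follows by translating Gale's condition for $S$ through the index shift $\ell \mapsto \ell + 1$, using that the deleted element $1$ belongs to $S$ and hence creates no new non-selected boundary. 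Consequently the Group~1 submatrix is obtained from a selection of columns of $M^{2q}_{n-1}$ by scaling row $i$ by the nonnegative factor $|i-1|$ and adjoining a zero row for $i = 1$ (where the factor vanishes). Since row scaling by nonnegative diagonals, column selection, and adjoining zero rows do not increase the nonnegative rank, the Group~1 submatrix has nonnegative rank at most $\nnegrk(M^{2q}_{n-1}) = \xc(P^{2q}_{n-1})$.

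Group~2 is handled symmetrically, factoring out the term $|i - n|$ for the element $n$. Writing $S = S'' \cup \{n\}$ with $S'' \subseteq [n-1]$, one has $M^{2q+1}_{n}(i,S) = |i - n| \cdot M^{2q}_{n-1}(i, S'')$ for $i \in [n-1]$, while for $i = n$ the scaling factor vanishes. Here $S''$ is directly a Gale-even set of size $2q$ in $[n-1]$: restricting Gale's condition to the ground set $[n-1]$ is immediate since $n \notin [\ell_1, \ell_2]$ whenever $\ell_1, \ell_2 \le n-1$. Thus the Group~2 submatrix also has nonnegative rank at most $\xc(P^{2q}_{n-1})$. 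Finally, the column partition gives $M^{2q+1}_{n} = [\,A \mid B\,]$ with $A$ the Group~1 submatrix and $B$ the Group~2 submatrix, and the nonnegative rank is subadditive under horizontal concatenation (stack the two factorizations in complementary coordinate blocks), so $\nnegrk(M^{2q+1}_{n}) \le 2\,\xc(P^{2q}_{n-1})$. Yannakakis's theorem then yields the claimed bound.

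I expect the main obstacle to be the combinatorial bookkeeping of Gale's evenness condition under the two passage maps (deleting $1$ and shifting, versus deleting $n$): that is, verifying that deleting a boundary element of an odd Gale-even set and restricting to the appropriate ground set $[n-1]$ yields an \emph{even} Gale-even set, so that the residual products really are columns of $M^{2q}_{n-1}$. The remainder -- the row rescaling and the concatenation bound -- is routine and parallels the arguments in Lemmas~\ref{lem:Kronecker} and~\ref{lem:even_case}.
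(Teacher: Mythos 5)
Your proposal is correct and follows essentially the same route as the paper: partition the columns of $M^{2q+1}_{n}$ according to whether $1\in S$ or $n\in S$, factor out the boundary slack $|i-1|$ (resp.\ $|n-i|$) as a nonnegative row scaling, recognize the residual as (a column selection of) the slack matrix of $P^{2q}_{[2,n]}\cong P^{2q}_{n-1}$ (resp.\ $P^{2q}_{[1,n-1]}$), and conclude by subadditivity of nonnegative rank under column concatenation. Your extra bookkeeping (the parity argument showing every odd Gale-even set contains $1$ or $n$, and the verification that the shifted set is Gale-even in $[n-1]$) is exactly what the paper leaves implicit, and it checks out.
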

\begin{proof}

Let us prove that $\xc(P^{2q+1}_{n}) \le \xc(P^{2q}_{[2,n]}) + 
\xc(P^{2q}_{[1,n-1]}) = 2 \xc(P^{2q}_{n-1})$.

 Every set~$S\subseteq[n]$, $|S|=2q+1$ satisfying Gale's evenness condition has one of the following forms:
\begin{enumerate}
 \item \label{case:first_element} $1\in S$ and the set~$S\setminus\{1\}$ defines a facet of the polytope~$P^{2q}_{[2,n]}$
 \item \label{case:last_element} $n\in S$ and the set~$S\setminus\{n\}$ defines a facet of the polytope~$P^{2q}_{[1,n-1]}$.
\end{enumerate}
The columns indexed by the sets~$S$ satisfying the condition~\eqref{case:first_element} form a matrix, which is equal to the matrix~$M^{2q}_{[2,n]}$, where the row~$M^{2q}_{[2,n]}(i)$, $i\in [2,n]\cap Z$ is scaled by the positive scalar~$i-1$, plus an appended zero row corresponding to the index~$i=1$. Thus, the nonnegative rank of the submatrix of~$M^{2q+1}_{n}$ indexed by sets~$S$ satisfying the condition~\eqref{case:first_element} and the nonnegative rank of~$M^{2q}_{[2,n]}$ are equal. We can estimate the nonnegative rank of the submatrix of~$M^{2q+1}_{n}$ indexed by sets~$S$ satisfying the condition~\eqref{case:last_element} in a similar way.
\end{proof}

Finally, Lemmas~\ref{lem:two_dim},~\ref{lem:even_case} and~\ref{lem:odd_case} together lead to our theorem.
\begin{thm}
\label{thm:main}
  The polytope~$P^{\di}_{n}$ has an extension of size at most~$2(2\lfloor \log_2(n-1)\rfloor+2)^{\lfloor \di/2 \rfloor}$.
\end{thm}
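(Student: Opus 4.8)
The plan is to deduce the theorem from Lemmas~\ref{lem:two_dim},~\ref{lem:even_case}, and~\ref{lem:odd_case} by a case distinction on the parity of~$\di$, feeding the base-case bound on~$\xc(P^2_n)$ through the even- and odd-dimensional reductions. The whole argument is essentially bookkeeping, so I do not expect a genuine obstacle; the only subtlety is tracking how the number of vertices shifts in the odd case.

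First I would handle even dimensions, writing $\di = 2q$ so that $\lfloor \di/2 \rfloor = q$. Combining Lemma~\ref{lem:even_case} with the bound from Lemma~\ref{lem:two_dim} yields
\[
\xc(P^{2q}_n) \le \big(\xc(P^2_n)\big)^q \le \big(2\lfloor \log_2(n-1)\rfloor+2\big)^q,
\]
which already lies below the claimed bound, in fact without even needing the leading factor~$2$.

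Next I would treat odd dimensions, writing $\di = 2q+1$ so that again $\lfloor \di/2 \rfloor = q$. Here I would chain Lemma~\ref{lem:odd_case} with the even-case estimate applied to $P^{2q}_{n-1}$:
\[
\xc(P^{2q+1}_n) \le 2\,\xc(P^{2q}_{n-1}) \le 2\big(\xc(P^2_{n-1})\big)^q \le 2\big(2\lfloor \log_2(n-2)\rfloor+2\big)^q.
\]
Since the floor-logarithm is nondecreasing, $\lfloor \log_2(n-2)\rfloor \le \lfloor \log_2(n-1)\rfloor$, so this is at most $2(2\lfloor \log_2(n-1)\rfloor+2)^q$, matching the target.

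The only point needing care is exactly this shift $n \mapsto n-1$ forced by Lemma~\ref{lem:odd_case}, which is why the bound is phrased through $n-1$ rather than $n$, and the monotonicity of $\lfloor \log_2(\cdot)\rfloor$ is what absorbs the resulting $\log_2(n-2)$ back into $\log_2(n-1)$. Putting both parities together gives the uniform bound $2(2\lfloor \log_2(n-1)\rfloor+2)^{\lfloor \di/2 \rfloor}$ for all $\di \ge 2$, completing the proof.
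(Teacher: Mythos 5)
Your proposal is correct and matches the paper's argument: the paper simply states that Lemmas~\ref{lem:two_dim}, \ref{lem:even_case} and~\ref{lem:odd_case} together yield the theorem, and your parity case split with the monotonicity of $\lfloor \log_2(\cdot)\rfloor$ absorbing the $n\mapsto n-1$ shift is exactly the intended bookkeeping.
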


\section{Concluding Remarks}

We remark that Lemma~\ref{lem:two_dim} crucially uses the fact that $P^{2}_{n}$ is the convex hull of the points $(i,i^2)$ for $n$ \emph{consecutive} integers $i \in [n]$. Actually, \cite{FRT} prove a $\Omega(\sqrt{n} / \sqrt{\log n})$ lower bound on the worst-case extension complexity of a $2$-dimen\-sional cyclic polytope of the form $P = \conv \{ (i,i^2) : i \in X\}$ where $X \subseteq [2n]$ and $|X| = n$. 

Finally, there seems to be currently no lower bound on the extension complexity of $P^d_{n}$ that would match the upper bound given by Theorem~\ref{thm:main}. For instance, it follows from~\cite{FKPT} that the best lower bound that only relies on the combinatorial structure is $O(d^2 \log n)$.

\bigskip

We would like to thank the anonymous referees for their comments which led to a better exposition of the results in the present paper.

\begin {thebibliography} {1}
\bibitem{CCZ}{Conforti, Michele and Cornu{\'e}jols, G{\'e}rard and Zambelli,
              Giacomo, Extended formulations in combinatorial optimization, \emph{Annals of Operations Research}(204), 97--143, 2013}

\bibitem{FKPT}{Fiorini, Samuel and Kaibel, Volker and Pashkovich, Kanstantsin and Theis, Dirk O., Combinatorial bounds on nonnegative rank and extended formulations, \emph{Discrete Mathematics}(313), 67--83, 2013}

\bibitem{FRT}{Fiorini, Samuel and Rothvo{\ss}, Thomas and Tiwary, Hans Raj, Extended Formulations for Polygons, \emph{Discrete and Computational Geometry}(48), 658--668, 2012}

\bibitem{Gruenbaum}{Gr{\"u}nbaum, Branko, Convex polytopes, \emph{Graduate Texts in Mathematics}, 2003}

\bibitem{KaibelPashkovich}{Kaibel, Volker and Pashkovich, Kanstantsin, Constructing extended formulations from reflection relations, \emph{Facets of Combinatorial Optimization}, 77--100, 2013}

\bibitem{Yannakakis}{Yannakakis, Mihalis, Expressing combinatorial optimization problems by linear programs, \emph{Journal of Computer and System Sciences}(43), 441--466, 1991}

\bibitem{Ziegler}{Ziegler, G{\"u}nter M., Lectures on polytopes, \emph{Graduate Texts in Mathematics},1995}

\end {thebibliography}

\end{document}